\numberwithin{equation}{section}
\newtheorem{theorem}{Theorem}
\newtheorem{lemma}{Lemma}
\newtheorem{assumption}{Assumption}
\newtheorem*{maintheorem}{Main Theorem}
\DeclareMathOperator{\dive}{div}
\DeclareMathOperator{\loc}{loc}
\DeclareMathOperator{\dist}{dist}
\DeclareMathOperator{\trace}{Tr}
\begin{document}
    \title{On Kato's conditions for the inviscid limit of the two-dimensional stochastic Navier-Stokes equation.}
    \author{Ya-Guang Wang}
    \affil{School of Mathematical Sciences, Center for Applied Mathematics, MOE-LSC and SHL-MAC, Shanghai Jiao Tong University, 200240 Shanghai, China; ygwang@sjtu.edu.cn}
    \author{Meng Zhao\footnote{corresponding author}}
    \affil{School of Mathematical Sciences, Shanghai Jiao Tong University, 200240 Shanghai, China; mathematics\_zm@sjtu.edu.cn}

    \date{}
	\maketitle
	\fontsize{12}{15}
	\selectfont
    \begin{abstract}
        We study the asymtotic behavior of solutions to the two-dimensional stochasitc Navier-Stokes (SNS) equation  in the small viscosity limit. The SNS equation is supplemented with no-slip boundary condition, in which a strong boundary layer shall appear in the limit due to the mismatch of the boundary conditions of the SNS equation and the corresponding limit problem. Several equivalent dissipation conditions are derived to ensure the convergence hold in the energy space. One novelty of this work is that we do not assume any smallness for the noise.
    \end{abstract}
	\tableofcontents
    \newpage
    \section{Introduction}
    In this paper, we address the asymtotic behavior of solutions to the two-dimensional stochastic Navier-Stokes equation in small viscosity limit. Consider the following initial-boundary value problem in $\{(t,x)| t>0, x\in D\}$
    \begin{align}\label{1}
        \begin{cases}
    d u^{\nu}(t)+((u^{\nu}\cdot\nabla) u^{\nu}+\nabla p^{\nu}-\nu\Delta u^{\nu})dt=dW_Q(t)\\
    \dive u^{\nu}=0\\
    u^{\nu}(0)=u_0\\
    u^{\nu}|_{\partial D}=0
    \end{cases},
    \end{align}
    where $D\subset \mathbb{R}^2$ is a bounded domain with smooth boundary $\partial D$, $\nu$ is the kinetic viscosity, $u^{\nu}$ is the velocity field, $p^{\nu}$ is the pressure and $W_Q$ denotes the Wiener process taking values in some function spaces specified later. If we formally set the kinetic viscosity $\nu$ to $0$, then formally we arrive at the following problem for the two-dimensional stochastic Euler equation:
    \begin{align}\label{2}
    \begin{cases}
    d u^E(t)+((u^{E}\cdot\nabla) u^{E}+\nabla p^{E})dt=dW_Q(t)\\
    \dive u^E=0\\
    u^E(0)=u_0\\
    (u^E\cdot n)|_{\partial D}=0
    \end{cases}.
    \end{align}
    Here, $n$ is the outward normal vector to the boundary $\partial D$. The proposal of this paper is to derive a necessary and sufficient condition ensuring the limit
    \[\lim_{\nu\to0^+}\|u^{\nu}-u^E\|_{C([0,T];L^2(D))}=0\]
    in probability. 
    
    Historically, the study of the two-dimensional stochastic Navier-Stokes equation was motivated by the theory of turbulence. In order to simulate the complicated dynamics of turbulence, one could input ``genericity" into the Navier-Stokes equation by considering a random force. The original idea could be traced back to Novikov \cite{Novikov}, see also \cite{1988Mathematical}. The wellposedness theory for the stochastic Navier-Stokes equation was first established by Bensousan and Temam in \cite{BT,BENSOUSSAN1973195}, and has been extensively studied since then, e.g. \cite{Flandoli1995MartingaleAS,Bensousan,LIANG2021473}. Compared with the stochastic Navier-Stokes equation, less is known for the stochastic Euler equation, see \cite{GlattHoltz2011LocalAG} and references therein. It is clear that even in the deterministic case, the inviscid limit up to the boundary is impossible to hold in general, due to the mismatch of boundary conditions implemented with the Navier-Stokes and Euler equations respectively. In the seminal paper \cite{Kato1984RemarksOZ}, Kato considered the inviscid limit for the deterministic problem and proved that the validity of 
    \[\lim_{\nu\to0^+}\|u^{\nu}-u^E\|_{C([0,T];L^2(D))}=0\]
    is equivalent to the vanishing of energy dissipation near the boundary
    \begin{align}\label{11}
        \lim_{\nu\to0^+}\int_0^T\|\nabla u^{\nu}\|^2_{L^2(\Gamma_{c\nu})}dt=0,
    \end{align}
    where $\Gamma_{c\nu}$ is a thin neighbourhood with width being $O(\nu)$ of the boundary $\Gamma:=\partial D$. Since then, several modifications of the above energy dissipation condition $(\ref{11})$ have been studied. For example, Wang \cite{Xiaoming} proved that the vanishing of energy involved with only one component of $\nabla u^{\nu}$ near the boundary $\partial D$ is sufficient to ensure the inviscid limit, and Kelliher \cite{Kelliher2007OnKC} replaced the vanishing of $\nabla u^{\nu}$ by the vanishing of vorticity near the boundary $\partial D$. In the case of stochastic inviscid limit, as far as we know, there is only a few work. Luongo \cite{Luongo2021InviscidLF} extended Kato's condition $(\ref{11})$ to
    \[\lim_{\nu\to 0^+}\mathbb{E}\int_0^T\|\nabla u^{\nu}\|^2_{L^2(\Gamma_{c\nu})}dt=0\]
    which ensures the limit
    \[\lim_{\nu\to0^+}\mathbb{E}\|u^{\nu}-u^E\|_{C([0,T];L^2(D))}=0,\]
    where $u^{\nu}$ solves the stochastic Navier-Stokes equation $(\ref{1})$ driven by a ``small" and finite-dimensional noise $\sqrt{v}\sum_{j=1}^N\sigma_j\beta_j(t)$ so that the corresponding inviscid system is the deterministic Euler equation. In \cite{CIPRIANO20152405}, Cipriano and Torrecilla considered the two-dimensional stochastic Navier-Stokes equation driven by an infinite-dimensional Wiener process $W_Q(t)$ implemented with the Navier-slip boundary condition, and proved that 
    \[\lim_{\nu\to0^+}\|u^{\nu}-u^E\|_{C([0,T];L^2(D))}=0\]
    $\mathbb{P}$-almost surely without any energy dissipation conditions. This is because the boundary effect in the case of Navier-slip boundary condition is ``weak". Indeed, by applying the method of multi-scale analysis, one would discover that the boundary layer corrector appears in the next order in the formal expansion of $u^{\nu}$, which makes the validation of the inviscid limit possible, e.g. see \cite{Wang2010BoundaryLI,Iftimie2011ViscousBL} for the corresponding deterministic problem. 
    
    To state our main result, let us introduce the following notations. In what follows, we shall always fix a stochastic basis $(\Omega,\mathscr{F},\mathbb{P},\mathbb{F},\{W_Q(t)\}_{t\ge0})$, where the filtration $\mathbb{F}:=\{\mathscr{F}_t\}_{t\ge 0}$ satisfies the usual condition and the Wiener process $\{W_Q(t)\}_{t\ge 0}$ is adapted to $\mathbb{F}$. Write
    \[H:=\{u\in L^2(D)|\dive u=0,\ u\cdot n|_{\partial D}=0\}.\]
    Let $P: L^2(D)\to H$ denote the Leray projector. Define 
    \[\mathscr{D}(P\Delta):=\left\{u\in H^2(D)\bigcap H_0^1(D)\big| \dive u=0\right\}\]
    endowed with the norm
    \[\|u\|_{\mathscr{D}(P\Delta)}:=\|P\Delta u\|_{L^2(D)},\]
    and inductively for each $k\ge 2$,
    \[\mathscr{D}(P\Delta)^k:=\{u\in \mathscr{D}(P\Delta)^{k-1}| P\Delta u\in \mathscr{D}(P\Delta)^{k-1}\}\]
    endowed with the norm
    \[\|u\|_{\mathscr{D}(P\Delta)^k}:=\|(P\Delta)^k u\|_{L^2(D)}.\]
    As shown in \cite{Guermond2011ANO}, for each $k \ge 1$, we have \[C_k^{-1}\|u\|_{\mathscr{D}(P\Delta)^k}\le \|u\|_{H^{2k}(D)}\le C_k\|u\|_{\mathscr{D}(P\Delta)^k}\]
    for all $u\in \mathscr{D}(P\Delta)^k$. Hence, one could simply view the space $\mathscr{D}(P\Delta)^k$ as a closed subspace of the Sobolev space $H^{2k}(D)\bigcap H$. We need the following assumption:
    \begin{assumption}\label{assumption}\rm
    The initial data $u_0\in H^4(D)\bigcap H$ is non-random and the Wiener process $\{W_Q(t)\}_{t\ge0}$ takes value in the space $\mathscr{D}(P\Delta )^3$ with the covariance operator given by $Q$.
    \end{assumption}
    The main result is presented below.
    \begin{maintheorem}\label{theorem3}
    Let Assumption \ref{assumption} be fulfilled. Assume $u^{\nu}$ and $u^E$ are solutions to the problems $(\ref{1})$ and $(\ref{2})$ respectively, then the following statements are equivalent:
    \begin{enumerate}[(1)]
        \item $\lim_{\nu\to0^+}\|u^{\nu}-u^E\|_{C([0,T];L^2(D))}=0$ in probability;
        \item $\lim_{\nu\to0^+}\nu\int_0^T\|\nabla u^{\nu}\|^2_{L^2(D)}dt=0$ in probability;
        \item there exists a deterministic parameter $\delta_0(\nu)$ such that 
        \[\lim_{\nu\to0^+}\frac{\nu}{\delta_0(\nu)}=0\qquad \lim_{\nu\to0^+}\delta_0(\nu)=0\]
        and one of the following
        \begin{enumerate}
            \item $\lim_{\nu\to0^+}\nu\int_0^T\|\partial_{n} u^{\nu}_{\tau}\|^2_{L^2(\Gamma_{\delta_0})}dt=0$
            \item $\lim_{\nu\to0^+}\nu\int_0^T\|\partial_{n} u^{\nu}_{n}\|^2_{L^2(\Gamma_{\delta_0})}dt=0$
            \item $\lim_{\nu\to0^+}\nu\int_0^T\|\partial_{\tau} u^{\nu}_{\tau}\|^2_{L^2(\Gamma_{\delta_0})}dt=0$
        \end{enumerate}
        holds in probability, where $\Gamma_{\delta_0}$ is a thin neighbourhood of the boundary $\partial D$ with width being $\delta_0$ and the notation such as $\partial_n u_{\tau}^{\nu}$ denotes the normal derivative of the tangetial velocity $\frac{\partial}{\partial n}(u^{\nu}\cdot \tau)$, where $n$ and $\tau$ are proper extensions of the unit normal and tangential vectors respectively of $\partial D$ in a small neighbourhood of $\partial D$.
    \end{enumerate}
    \end{maintheorem}
    Let us explain the main difficulty for studying the stochastic inviscid limit problem, especially compared with its deterministic counterpart. To remedy the disparity of boundary conditions for problems $(\ref{1})$ and $(\ref{2})$, Kato constructed an artificial corrector $v$ in \cite{Kato1984RemarksOZ} such that $v|_{\partial D}=u^E|_{\partial D}$ and $v$ is supported in a thin domain near the boundary, which ensures the smallness of $v$. The corrector $v$ could viewed as $v=Tu^E$, where $T$ is a linear differential operator of order one. Hence, if one repeats the argument of Kato and thereby sets
    \[w:=u^{\nu}-u^E+Tu^E,\]
    then the dynamics of $w$ is given by
    \[dw=(P((u^{E}\cdot\nabla)u^{E})-P((u^{\nu}\cdot\nabla)u^{\nu})+\nu P\Delta u^{\nu}-TP((u^{E}\cdot\nabla)u^{E}))dt+TdW_Q(t).\]
    Notice that the lack of differentiablity of $w$ with respect to the time variable would prevent us from applying the classical energy method. Instead, to establish estimate for $w$, one has to use the It\^o formula to obtain
    \begin{align*}
        &\|w\|^2_{L^2}=2\int_0^t\langle w,TdW_Q(s)\rangle_{L^2}\\&+2\int_0^t\langle w,P((u^{E}\cdot\nabla)u^{E})-P((u^{\nu}\cdot\nabla)u^{\nu})+\nu P\Delta u^{\nu}-TP((u^{E}\cdot\nabla)u^{E})\rangle_{L^2}ds\\&+t\trace TQT^*.
    \end{align*}
    Notice that $t\trace TQT^{*}$ comes from the diffusion part of the boundary corrector $v$ and is independent of the viscosity, though $T$ possesses certain type of smallness, that is, $Tu$ is supported in a thin domain for all smooth divergence free vector field $u$. The lack of control on $t\trace TQT^{*}$ illustrates the main difficulty for studying the stochastic inviscid limit problem. To overcome this difficulty, we split $\{u^{\nu}(t)\}_{t\ge0}$ into two parts 
    \begin{align}\label{13}
    u^{\nu}(t)=v^{\nu}(t)+z^{\nu}(t),
    \end{align}
    where $z^{\nu}$ solves the following problem for the stochastic linear Stokes equation
    \begin{align}\label{3}
    \begin{cases}
    d z^{\nu}(t)+(\nabla p^{\nu}_1-\nu\Delta z^{\nu})dt=dW_Q(t)\\
    \dive z^{\nu}=0\\
    z^{\nu}(0)=0\\
    z^{\nu}|_{\partial D}=0
    \end{cases}
    \end{align}
    and $v^{\nu}$ solves the problem of the NS-type PDE with random parameter:
    \begin{align}\label{4}
    \begin{cases}
    \partial_t v^{\nu}(t)+(u^{\nu}\cdot\nabla) u^{\nu}+\nabla (p^{\nu}-p^{\nu}_1)-\nu\Delta v^{\nu}=0\\
    \dive v^{\nu}=0\\
    v^{\nu}(0)=u_0\\
    v^{\nu}|_{\partial D}=0.
    \end{cases}
    \end{align}
    The split method enables us to deal with the stochastic nature and the boundary effect separately. Indeed, we analyse the system $(\ref{3})$ with tools from the theory of semigroups and apply the classical Kato-type argument on the system $(\ref{4})$. 
    
    \section{Preliminaries}
    \subsection{Wellposedness of problems (\ref{1}) and (\ref{2})}
    The following wellposedness result of $(\ref{1})$ is well-known. As for a proof, we refer to \cite{kuksin2012mathematics}. 
    \begin{theorem}
    Let Assumption \ref{assumption} be fulfilled. Then, there exists a unique solution $\{u^{\nu}(t)\}_{t\ge0}$ to $(\ref{1})$ such that 
    \begin{enumerate}[(1)]
        \item almost every trajectory of $\{u^{\nu}(t)\}_{t\ge0}$ belongs to the space 
        \[\chi:=C(\mathbb{R}_+;H)\bigcap L_{\loc}^2\left(\mathbb{R}_+,V\right),\]
        where $\mathbb{R}_+=[0,\infty)$ and $V:=H_0^1(D)\bigcap H$;
        \item the $H$-valued process $\{u^{\nu}(t)\}_{t\ge0}$ is adapted to the filtration $\mathbb{F}$;
        \item the system $(\ref{1})$ holds in the sense that 
        \[\mathbb{P}\left[u^{\nu}(t)+\int_0^t P((u^{\nu}\cdot \nabla )u^{\nu})-\nu P\Delta u^{\nu}ds=u_0+W_Q(t),\ \forall t\ge0\right]=1.\]
    \end{enumerate}
    The above uniqueness of the solution $u^{\nu}$ holds in the sense of indistinguishability, that is, if $\{\tilde{u}^{\nu}(t)\}_{t\ge0}$ is another solution of $(\ref{1})$, then
    \begin{align*}
        \mathbb{P}[\tilde{u}^{\nu}(t)=u^{\nu}(t),\ \forall t\ge0]=1.
    \end{align*}
    \end{theorem}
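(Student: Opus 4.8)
The plan is to exploit the additive structure of the noise and reduce the stochastic problem $(\ref{1})$ to a random partial differential equation that can be solved pathwise by the classical deterministic two-dimensional theory. This is precisely the splitting already introduced in $(\ref{13})$: let $z^{\nu}$ be the solution of the linear stochastic Stokes problem $(\ref{3})$ and set $v^{\nu}:=u^{\nu}-z^{\nu}$, so that $v^{\nu}$ formally solves $(\ref{4})$. Since the diffusion coefficient does not depend on the unknown, existence and uniqueness can then be obtained without invoking the Yamada--Watanabe machinery.

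First I would treat the linear problem $(\ref{3})$. Writing $A:=-P\Delta$ for the Stokes operator, which is self-adjoint, positive and has compact resolvent on $H$, the solution of $(\ref{3})$ is the stochastic convolution
\[z^{\nu}(t)=\int_0^t e^{-\nu A(t-s)}\,dW_Q(s).\]
Under Assumption \ref{assumption} the covariance $Q$ is trace class on $H$ and the noise takes values in $\mathscr{D}(P\Delta)^3\subset H^6(D)\cap H$; by the factorization method for stochastic convolutions this yields a version of $z^{\nu}$ whose trajectories belong almost surely to $C(\mathbb{R}_+;H)\cap L^2_{\loc}(\mathbb{R}_+;V)$, and in fact to spaces of much higher spatial regularity, with $z^{\nu}$ adapted to $\mathbb{F}$. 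In particular $z^{\nu}\in L^2_{\loc}(\mathbb{R}_+;V)\cap L^4_{\loc}(\mathbb{R}_+;L^4(D))$ almost surely, which is all that the nonlinear analysis below requires.

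Next, for a.e.\ fixed $\omega$ I would solve $(\ref{4})$ as a deterministic equation. Substituting $u^{\nu}=v^{\nu}+z^{\nu}$, the unknown $v^{\nu}$ satisfies a two-dimensional Navier--Stokes equation in which $z^{\nu}(\cdot,\omega)$ enters only as a fixed, regular drift. Testing with $v^{\nu}$ and using that the trilinear form $b(a,b,c):=\int_D ((a\cdot\nabla)b)\cdot c$ satisfies $b(a,c,c)=0$ for $a\in H$ and $c\in V$, the only surviving interaction terms are $b(v^{\nu},z^{\nu},v^{\nu})$ and $b(z^{\nu},z^{\nu},v^{\nu})$. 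The two-dimensional Ladyzhenskaya inequality $\|f\|^2_{L^4}\le C\|f\|_{L^2}\|\nabla f\|_{L^2}$ lets me absorb both into the viscous term, giving
\[\frac{d}{dt}\|v^{\nu}\|^2_{L^2}+\nu\|\nabla v^{\nu}\|^2_{L^2}\le \frac{C}{\nu}\|\nabla z^{\nu}\|^2_{L^2}\|v^{\nu}\|^2_{L^2}+\frac{C}{\nu}\|z^{\nu}\|^4_{L^4}.\]
Gr\"onwall's inequality, together with the pathwise integrability of $z^{\nu}$ just established, bounds $v^{\nu}$ in $C([0,T];H)\cap L^2(0,T;V)$ for every $T$. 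A Galerkin approximation in the eigenbasis of $A$, combined with these uniform bounds and an Aubin--Lions compactness argument to pass to the limit in the nonlinear term, produces a solution $v^{\nu}(\cdot,\omega)$; the same Ladyzhenskaya estimate applied to the difference of two solutions yields pathwise uniqueness.

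Finally I would assemble the probabilistic statement. Measurability of $\omega\mapsto v^{\nu}(\cdot,\omega)$ follows from the continuous dependence of the deterministic solution on the datum $z^{\nu}(\cdot,\omega)$, and since the value of $v^{\nu}$ at time $t$ depends only on $\{z^{\nu}(s)\}_{s\le t}$, the process $u^{\nu}=v^{\nu}+z^{\nu}$ is adapted to $\mathbb{F}$ with trajectories in $\chi$, establishing $(1)$ and $(2)$; adding the equations for $z^{\nu}$ and $v^{\nu}$ recovers the integral identity in $(3)$. Indistinguishability of any two solutions follows from the linear (hence unique) solvability of the Stokes part and the deterministic uniqueness of the $v^{\nu}$-equation for a.e.\ $\omega$. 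The main obstacle is the nonlinear term: it is precisely the cross interactions between $v^{\nu}$ and $z^{\nu}$ that must be controlled, and this hinges entirely on the two-dimensional Ladyzhenskaya inequality --- in three dimensions the same scheme would break down, which is why the result is special to $D\subset\mathbb{R}^2$.
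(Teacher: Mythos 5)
The paper does not prove this theorem itself---it simply cites \cite{kuksin2012mathematics}---and the standard proof found there is exactly your argument: subtract the stochastic Stokes convolution $z^{\nu}$ and solve the resulting random two-dimensional Navier--Stokes equation pathwise via Galerkin approximation, the Ladyzhenskaya inequality and Gr\"onwall's lemma, which is moreover the same splitting $(\ref{13})$, $(\ref{3})$--$(\ref{4})$ that the paper reuses later for the inviscid limit. Your sketch is correct (with the usual understanding that the identity in statement \textit{(3)} is read in the dual space $V'$ at the regularity level of $\chi$), so nothing further is needed.
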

    The following wellposedness theory for $(\ref{2})$ is given in \cite{GlattHoltz2011LocalAG}.
    \begin{theorem}\label{theorem2}
    Let Assumption \ref{assumption} be fulfilled. Then, there exists a unique solution $\{u^{E}(t)\}_{t\ge0}$ to $(\ref{2})$ such that
    \begin{enumerate}[(1)]
        \item almost every trajectory of $\{u^{E}(t)\}_{t\ge0}$ belongs to the space $C(\mathbb{R}_+;H^4\bigcap H)$;
        \item $\{u^{E}(t)\}_{t\ge0}$ is adapted to the filtration $\mathbb{F}$;
        \item the system $(\ref{2})$ holds in the sense that 
        \[\mathbb{P}\left[u^{E}(t)+\int_0^t P((u^{E}\cdot \nabla) u^{E})ds=u_0+W_Q(t),\ \forall t\ge0\right]=1,\]
    \end{enumerate}
    and the uniqueness of the solution $u^E$ holds in the sense of indistinguishability.
    \end{theorem}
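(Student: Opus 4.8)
The plan is to follow the pathwise reduction strategy for additive-noise equations. Since the forcing is additive and $W_Q$ takes values in $\mathscr{D}(P\Delta)^3$, which by the equivalence quoted above embeds continuously in $H^6(D)\cap H$, I would first remove the stochastic integral by the substitution $v(t):=u^E(t)-W_Q(t)$. Applying the Leray projector $P$ to $(\ref{2})$ and using $PW_Q=W_Q$ (as $W_Q$ is $H$-valued), the process $u^E$ should solve $du^E+P((u^E\cdot\nabla)u^E)\,dt=dW_Q$, so that $v$ solves, for $\mathbb{P}$-a.e. fixed $\omega$, the random (pathwise) problem
\begin{equation*}
\partial_t v+P\big(((v+g)\cdot\nabla)(v+g)\big)=0,\qquad v(0)=u_0,
\end{equation*}
where $g(t):=W_Q(t,\omega)$ is a known continuous $H^6(D)\cap H$-valued path. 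The essential gain is that, because the noise is additive, no It\^o correction appears and $g$ enters without any time derivative; for each $\omega$ one is thus left with a genuinely deterministic Euler-type equation whose only new ingredient is a spatially smooth, merely time-continuous coefficient $g$.

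Next I would establish pathwise local existence and uniqueness of $v$ in $C([0,\tau);H^4(D)\cap H)$ for each such $\omega$. I would set up a standard approximation scheme (Galerkin projection onto the eigenbasis of the Stokes operator $-P\Delta$, or a vanishing-viscosity regularisation) and close $H^4$ energy estimates. Writing $u=v+g$, which satisfies $\dive u=0$ and $u\cdot n|_{\partial D}=0$, the decisive cancellation is that the top-order transport term tests to zero, since $\int_D (u\cdot\nabla)\phi\cdot\phi\,dx=\tfrac12\int_{\partial D}(u\cdot n)|\phi|^2\,dS=0$; this removes the apparent loss of one derivative in the nonlinearity, and the remaining commutators are controlled by Kato–Ponce product estimates. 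The terms containing $g$ are of lower effective order because $g\in H^6$ and are absorbed by Young's inequality, leading to a Grönwall-type differential inequality for $\|v\|_{H^4}^2$ with coefficients depending on $\sup_{[0,T]}\|g\|_{H^6}$; uniqueness and continuous dependence on $(u_0,g)$ follow from an analogous estimate on the difference of two solutions.

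To upgrade the local solution to a global one on all of $\mathbb{R}_+$, I would exploit the two-dimensional structure through the scalar vorticity $\varpi:=\curl u$. In 2D the vortex-stretching term vanishes, so $\varpi$ satisfies a transport equation $\partial_t\varpi+u\cdot\nabla\varpi=F$ whose right-hand side $F$ collects only $g$-dependent contributions and is controlled by $\|g\|_{H^6}$; this yields a priori bounds on $\|\varpi(t)\|_{L^p}$ and, after elliptic reconstruction of $u$ from $\varpi$ respecting the no-penetration boundary condition, on $\|v(t)\|_{H^4}$ that remain finite on every bounded interval, so the maximal existence time is $+\infty$ almost surely. Adaptedness of $u^E=v+W_Q$ then follows because the pathwise solution map $(u_0,g)\mapsto v$ is continuous (by the stability estimate) while each finite-dimensional approximation is $\mathbb{F}$-adapted, a property preserved in the limit; indistinguishability is immediate from pathwise uniqueness. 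I expect the principal obstacle to be the global $H^4$ estimate in a bounded domain: unlike the torus or the whole plane, closing the high-order energy estimate requires careful treatment of the boundary terms produced by integration by parts in the commutators, and the elliptic recovery of $u$ from the vorticity must be performed with the correct boundary conditions so that the $H^4$ norm is genuinely dominated by the globally bounded vorticity.
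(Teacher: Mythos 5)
The paper itself does not prove this theorem: it is quoted verbatim from the cited work of Glatt-Holtz and Vicol, so the comparison is between your sketch and that reference together with the way the paper later uses the result. Your pathwise reduction $v=u^E-W_Q$ is precisely the manipulation the paper performs immediately after the theorem (its $v^E$ in $(\ref{14})$, shown to solve the Euler-type PDE with random parameter and to lie in $\chi_1$), so your starting point is fully consistent with the paper. It is also genuinely different from the cited proof: Glatt-Holtz--Vicol treat general multiplicative noise, for which the additive subtraction is unavailable, and therefore work at the level of $u$ itself with cutoff/stopping-time arguments and stochastic compactness; in the purely additive setting of Assumption \ref{assumption}, your route is the more elementary one, and it delivers adaptedness and indistinguishability cheaply from the non-anticipating, continuous pathwise solution map, exactly as you say.

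Two steps of your sketch need repair, though both are repairable by standard means. First, $\varpi=\curl u$ is not differentiable in time, since $u=v+g$ and $g=W_Q(\cdot,\omega)$ is merely continuous in $t$; the transport equation must be written for $q:=\curl v$, namely $\partial_t q+u\cdot\nabla q=-u\cdot\nabla\curl g$, with the forcing controlled by $\|g\|_{C([0,T];H^6)}$. Second, and more seriously, the chain ``$L^p$ bounds on $\varpi$, then elliptic reconstruction, hence $\|v\|_{H^4}$ bounded'' is false as stated: elliptic regularity from $\varpi\in L^p$ yields only $W^{1,p}$ control of $u$. To globalize the $H^4$ solution you must combine the $H^4$ energy inequality, whose coefficient is $\|\nabla u\|_{L^\infty}$, with a Beale--Kato--Majda/Kato-type logarithmic estimate $\|\nabla u\|_{L^\infty}\lesssim \|\varpi\|_{L^\infty}\bigl(1+\log(1+\|u\|_{H^3})\bigr)$ and a double Gr\"onwall argument; the $L^\infty$ vorticity bound then prevents blow-up on bounded intervals. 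Relatedly, your proposed approximation schemes are both delicate in a bounded domain: Galerkin projection onto the Stokes (no-slip) eigenbasis does not commute with differential operators and will not close $H^4$ estimates uniformly, and vanishing viscosity with no-slip boundary conditions is exactly the boundary-layer problem this paper studies; the standard constructions at this regularity (Kato, Temam, Bourguignon--Brezis) instead iterate on linear transport equations or use tangential-derivative plus vorticity estimates, which is the careful boundary treatment you correctly anticipate but do not supply.
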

    \subsection{Basic estimates}
    Let us recall some basic nonlinear estimates needed in our following analysis. For $m>\frac{2}{p}$, by applying the Galiardo-Nirenberg inequality, we have the following Moser-type estimate
    \[\|uv\|_{W^{m,p}(D)}\lesssim\|u\|_{L^{\infty}(D)}\|v\|_{W^{m,p}(D)}+\|v\|_{L^{\infty}(D)}\|u\|_{W^{m,p}(D)}\qquad \forall u,v\in W^{m,p}(D).\]
    We recall some estimates for the Leray projector $P$. For $v\in L^2(D)$, we have $Pv=(1-Q)v$, where 
    \[Qv=-\nabla \pi\]
    for any $\pi\in H^1(D)$ which solves the Neumann problem 
    \[\begin{cases}
    -\Delta \pi=\dive v\qquad &\mbox{in $D$}\\
    \frac{\partial\pi}{\partial n}=-v\cdot n\qquad &\mbox{on $\partial D$}.
    \end{cases}\]
    Moreover, if $v\in H^m(D)$ for a fixed $m\ge1$, then according to the regularity estimates of the Neumann elliptic problem, we have 
    \[\|\nabla \pi\|_{H^{m}(D)}\lesssim\|\dive v\|_{H^{m-1}(D)}+\|v\cdot n\|_{H^{m-\frac{1}{2}}(\partial D)}\lesssim\|v\|_{H^{m}(D)},\]
    which implies $\|Pv\|_{H^{m}(D)}\lesssim \|v\|_{H^{m}(D)}$, that is, the Leray projector $P$ is a bounded linear operator on $H^{m}(D)$. Hence, for $u\in H^{4}(D)$, it follows that
    \[\|P((u\cdot\nabla)u)\|_{H^{3}(D)}\lesssim\|u\|_{L^{\infty}(D)}\|\nabla u\|_{H^3(D)}+\|\nabla u\|_{L^{\infty}(D)}\|u\|_{H^{3}(D)}\lesssim \|u\|^2_{H^4(D)}.\]
    Combining the above estimate with Theorem $\ref{theorem2}$, we conclude that $\mathbb{P}$-almost every trajectory of $P(u^E\cdot\nabla)u^E$ belongs to $C(\mathbb{R}_+;H^3(D)\bigcap H)$. 
    Let 
    \begin{align}\label{14}
    v^E(t):=u^{E}(t)-W_Q(t).\end{align}
    Then, $\mathbb{P}$-almost surely we have 
    \[v^{E}(t)-v(0)=-\int_0^t P((u^{E}\cdot \nabla) u^{E})ds\qquad \forall t\ge0.\]
    This implies that $\mathbb{P}$-almost every trajectory of $v^E$ is differentiable and we have 
    \[\mathbb{P}[\partial_t v^E(t)=-P(u^E(t)\cdot\nabla)u^E(t),\ \forall t\ge0]=1.\]
    In particular, it follows that
    \[\sup_{t\in[0,T]}\|\partial_t v^E (t)\|_{C^1(\overline{D})}<\infty\]
    $\mathbb{P}$-almost surely. Finally, the spatial regularity of $u^E$ and $W_Q$ ensures that 
    \begin{align*}\sup_{t\in[0,T]}\| v^E(t)\|_{C^2(\overline{D})}<\infty\end{align*}
    $\mathbb{P}$-almost surely. In a word, $\mathbb{P}$-almost every trajectory of $v^E$ belongs to the space \begin{align}\label{aaaaa}\chi_1:=C^1\left(\mathbb{R}_+;H^3(D)\bigcap H\right)\bigcap C\left(\mathbb{R}_+;H^4(D)\bigcap H\right)\end{align}
    and solves the following Euler-type PDE with random parameter:
    \[\begin{cases}
    \partial_t v^{E}(t)+P(u^E(t)\cdot \nabla)u^E(t)=0\\
    v^E(t)=u^E(t)-W_Q(t)\\
    v^E(0)=u_0
    \end{cases}.\]
    \subsection{The boundary corrector}\label{aaa}
    In this sub-section, we construct an artificial boundary corrector to overcome the mismatch of boundary conditions between $u^{\nu}$ and $u^E$. We mainly follow the strategy given by Wang \cite{Xiaoming}. Suppose the boundary $\partial D$ is parameterized by 
    $s\mapsto (\gamma_1(s),\gamma_2(s))$, $s\in [0,1)$. Let $\delta$ be a small quantity which will be specified later. In the thin neighbourhood $\Gamma_{\delta}$ of the boundary $\partial D$, we introduce the following local coordinate. For $x\in\Gamma_{\delta}$, we set $P(x)$ as the closest point to $x$ on the boundary $\partial D$, and define $s(x)$ as the parameter of $\partial D$ corresponding to $P(x)$. Write
    \[\alpha(x):=\dist (x,P(x)).\]
    Clearly, $(s(x),\alpha(x))$ forms a local coordinate in $\Gamma_{\delta}$, where $s(x)$ is the coordinate along the tangential direction
    \[\tau:=(\gamma_1^{'}(s),\gamma_2^{'}(s)),\]
    and $\alpha(x)$ is the coordinate along the normal direction
    \[n:=(-\gamma_2^{'}(s),\gamma_1^{'}(s)).\]
    Let $\xi:[0,\infty)\to [-1,1]$ be a smooth cutoff function with $\xi(r)\equiv1$ near $r=0$ and $\xi(r)\equiv0$ for $r\ge1$ and
    \[\int_0^{\infty}\xi(r)dr=0\]
    Set
    \begin{align}\label{7}
        v(t,x):=-\nabla^{\perp}_x\left(v^E_{\tau}|_{\partial D}(s(x),t)\int_0^{\alpha(x)}\xi\left(\frac{r}{\delta}\right)dr\right),
    \end{align}
    where $\nabla_x^{\perp}:=(-\partial_{x_2},\partial_{x_1})$. Notice that 
    \begin{align}\label{0aa}\nabla_x=h^{-1}(s(x),\alpha(x))\tau(x) \partial_{\tau}+n(x)\partial_{n}\end{align}
    and 
    \begin{align}\label{0a}\partial_{\tau}=h(s(x),\alpha(x))\tau(x)\cdot\nabla_x,\qquad \partial_n=n(x)\cdot\nabla_x, \end{align}
    where 
    \begin{align}\label{0aaa}h(s(x),\alpha(x)):=1-\alpha(x)(\gamma_1^{'}(s(x))\gamma_2^{''}(s(x))-\gamma_2^{'}(s(x))\gamma_1^{''}(s(x))).\end{align}
    This implies
    \begin{align}\label{bb}v_{\tau}=\tau\cdot v=v_{\tau}^E|_{\partial D}(s(x),t)\xi\left(\frac{\alpha(x)}{\delta}\right)\end{align}
    and
    \begin{align}\label{bbb}v_{n}=n\cdot v=-\frac{1}{h(s(x),\alpha(x))}\partial_{\tau}v^{E}_{
    \tau}|_{\partial D}(s(x),t)\int_0^{\alpha(x)}\xi\left(\frac{r}{\delta}\right)dr,\end{align}
    and thus 
    \[v|_{\partial D}=v^E|_{\partial D}.\] 
    Furthermore, $v$ is supported in $\Gamma_{\delta}$ and we have 
    \begin{align}
        \partial_{\tau}v_{\tau}&=\partial_{\tau}v_{\tau}^E|_{\partial D}(s(x),t)\xi\left(\frac{\alpha(x)}{\delta}\right)\label{a}\\
        \partial_{n}v_{\tau}&=\frac{1}{\delta}v_{\tau}^E|_{\partial D}(s(x),t)\xi^{'}\left(\frac{\alpha(x)}{\delta}\right)\label{aa}\\
        \partial_{\tau}v_{n}&=-\partial_{\tau}\left(h^{-1}(s(x),\alpha(x))\partial_{\tau}v_{\tau}^E|_{\partial D}(s(x),t)\int_0^{\alpha(x)}\xi\left(\frac{r}{\delta}\right)dr\right)\label{aaa*}\\
        \partial_{n}v_{n}&=-\partial_{n}\left(h^{-1}(s(x),\alpha(x))\partial_{\tau}v_{\tau}^E|_{\partial D}(s(x),t)\int_0^{\alpha(x)}\xi\left(\frac{r}{\delta}\right)dr\right)\label{aaaa},
    \end{align}
    Combining $(\ref{aaaaa})$, $(\ref{a})$, $(\ref{aa})$, $(\ref{aaa*})$ and $(\ref{aaaa})$, we immediately obtain the following estimates: 
    \[\|v\|_{C([0,T];L^2(D))}\le C(\omega)\delta^{\frac{1}{2}},\qquad \|\partial_t v\|_{C([0,T];L^2(D))}\le C(\omega)\delta^{\frac{1}{2}},\]
    \[\|\nabla v\|_{C([0,T];L^2(D))}\le C(\omega)\delta^{-\frac{1}{2}},\qquad \| v\|_{C([0,T]\times \bar{D})}\le C(\omega),\]
    \[\| \nabla v\|_{C([0,T]\times \bar{D})}\le C(\omega)\delta^{-1},\qquad \|\rho\nabla v\|_{C([0,T]\times \bar{D})}\le C(\omega), \]
    \[\| \rho^2\nabla v\|_{C([0,T]\times \bar{D})}\le C(\omega)\delta,\]
    where
    \[\rho(x):=\dist(x,\partial D).\]
    We remark that these estimates hold $\mathbb{P}$-almost surely and the constant $C$ depends on $\omega\in\Omega$ via its dependence on $\|v\|_{C^{1}([0,T]\times \overline{D})}$. In what follows, we shall frequently use the notation $C(\omega)$ to denote a positive random variable which is finite $\mathbb{P}$-almost surely and does not depend on $\nu, t$ and $x$. 
    \section{Proof of the Main Theorem}
    \subsection{Proof of the Main Theorem \textit{(2)} from \textit{(1)}}
    Let us start with the implication from the statement \textit{(1)} to the statement \textit{(2)} given in the Main Theorem. By applying It\^o's formula to $\|u^{\nu}(t)\|^2_{L^2(D)}$ and $\|u^{E}(t)\|^2_{L^2(D)}$, we derive that
    \begin{align}\label{5}
        \|u^{\nu}(t)\|^2_{L^2(D)}+2\nu\int_0^t&\|\nabla u^{\nu}(s)\|^2_{L^2(D)}ds\notag\\&=\|u_0\|^2_{L^2(D)}+t\trace Q_0+2\int_0^t\langle u^{\nu}(s),d W_Q(s)\rangle_{L^2(D)}
    \end{align}
    and 
    \begin{align}\label{6}
        \|u^{E}(t)\|^2_{L^2(D)}=\|u_0\|^2_{L^2(D)}+t\trace Q_0+2\int_0^t\langle u^{E}(s),d W_Q(s)\rangle_{L^2(D)}
    \end{align}
    hold in the sense of indistinguishability. Here, we view $\{W_Q(t)\}_{t\ge0}$ as an $H$-valued Wiener process because of the natural embedding
    \[\mathscr{D}((-P\Delta)^3)\hookrightarrow H,\]
    and by $Q_0$, we denote the covariance operator of this $H$-valued Wiener process $\{W_{Q_0}(t)\}_{t\ge0}$. We claim that
    \begin{align}\label{12}
    \lim_{\nu\to0^+}\sup_{t\in[0,T]}\left|\int_0^t\langle u^{\nu}(s)-u^E(s),d W_Q(s)\rangle_{L^2(D)}\right|^2=0\end{align}
    in probability. Indeed, let us set $\tau_{\nu}:=\inf\{t\ge0|\int_0^t\|u^{\nu}(s)-u^E(s)\|^2_{L^2(D)}ds\ge 1\}.$ Since $\|u^{\nu}-u^{E}\|_{C([0,T];L^2(D))}\to 0$ in probability, then we have 
    \[\lim_{\nu\to 0^+}\int_0^T\|u^{\nu}(s)-u^E(s)\|^2_{L^2(D)}ds=0\]
    in probability, which implies
    \begin{align*}
        \mathbb{P}[\tau_{\nu}< T]\le\mathbb{P}\left[\int_0^T\|u^{\nu}(s)-u^E(s)\|^2_{L^2(D)}ds\ge 1\right]\longrightarrow 0,
    \end{align*}
    as $\nu\to0^+$.
    By applying the Burkholder-Gundy-Davis inequality and dominated convergence theorem, we derive 
    \begin{align*}
        \mathbb{E}\sup_{t\in[0,T\wedge\tau_{\nu}]}&\left|\int_0^t\langle u^{\nu}(s)-u^E(s),d W_Q(s)\rangle_{L^2(D)}\right|^2\\&\lesssim \mathbb{E}\int_0^{T\wedge\tau_{\nu}}\|u^{\nu}(s)-u^E(s)\|_{L^2(D)}^2ds\longrightarrow 0
    \end{align*}
    as $\nu\to 0^+$, which yields 
    \[\lim_{\nu\to0^+}\sup_{t\in[0,T\wedge\tau_{\nu}]}\left|\int_0^t\langle u^{\nu}(s)-u^E(s),d W_Q(s)\rangle_{L^2(D)}\right|^2=0\]
    in probability. Moreover, for a fixed $0<\epsilon<1$, we have
    \begin{align*}
        &\left[\sup_{t\in[0,T]}\left|\int_0^t\langle u^{\nu}(s)-u^E(s),d W_Q(s)\rangle_{L^2(D)}\right|^2\ge\epsilon\right]\\&=\left(\left[\sup_{t\in[0,T]}\left|\int_0^t\langle u^{\nu}(s)-u^E(s),d W_Q(s)\rangle_{L^2(D)}\right|^2\ge\epsilon\right]\bigcap[\tau_{\nu}\ge T]\right)
        \\&\bigcup\left(\left[\sup_{t\in[0,T]}\left|\int_0^t\langle u^{\nu}(s)-u^E(s),d W_Q(s)\rangle_{L^2(D)}\right|^2\ge\epsilon\right]\bigcap[\tau_{\nu}< T]\right)
        \\&\subset \left[\sup_{t\in[0,T\wedge\tau_{\nu}]}\left|\int_0^t\langle u^{\nu}(s)-u^E(s),d W_Q(s)\rangle_{L^2(D)}\right|^2\ge\epsilon\right]\bigcup[\tau_{\nu}< T].
    \end{align*}
    Therefore, we obtain
    \begin{align*}
        \mathbb{P}&\left[\sup_{t\in[0,T]}\left|\int_0^t\langle u^{\nu}(s)-u^E(s),d W_Q(s)\rangle_{L^2(D)}\right|^2\ge\epsilon\right]\\&\le \mathbb{P}[\tau_{\nu}< T]+\mathbb{P}\left[\sup_{t\in[0,T\wedge\tau_{\nu}]}\left|\int_0^t\langle u^{\nu}(s)-u^E(s),d W_Q(s)\rangle_{L^2(D)}\right|^2\ge\epsilon\right]\longrightarrow 0
    \end{align*}
    as $\nu\to 0^+$. This verifies our claim $(\ref{12})$. Now set $t=T$ and let $\nu\to0^+$ in $(\ref{5})$, we derive 
    \begin{align*}
        \|u^{E}(T)\|^2_{L^2(D)}+2\limsup_{\nu\to0^+}&\nu\int_0^T\|\nabla u^{\nu}(s)\|^2_{L^2(D)}ds\\&=\|u_0\|^2_{L^2(D)}+T\trace Q_0+2\int_0^T\langle u^{E}(s),d W_Q(s)\rangle_{L^2(D)}.
    \end{align*}
    By comparing with $(\ref{6})$, this implies 
    \[\lim_{\nu\to0^+}\nu\int_0^T\|\nabla u^{\nu}(s)\|^2_{L^2(D)}ds=0\]
    in probability as desired.
    \subsection{Proof of the Main Theorem \textit{(1)} from \textit{3(a)}}
    It is clear to have the assertion $\textit{(3)}$ from $\textit{(2)}$ given in the Main Theorem. We turn to the implication from $\textit{3(a)}$ to $\textit{(1)}$, and the proof is divided into the following two parts.
    
    \subsubsection{Analysis for the stochastic linear Stokes equation (\ref{3})} 
    First, let us recall the following facts, cf. \cite{semigroup} and \cite{Brezis}.
    \begin{enumerate}
        \item The Stokes operator $P\Delta$ generates a holomorphic semigroup $\{e^{tP\Delta}\}_{t\ge0}$ on $H$.
        \item The restriction of the semigroup $\{e^{tP\Delta}\}_{t\ge0}$ on $\mathscr{D}(P\Delta)^k$ forms a holomorphic semigroup with the generator given by $(P\Delta)|_{\mathscr{D}(P\Delta)^{k+1}}$.
    \end{enumerate}
    Before studying the problem $(\ref{3})$, let us first give the following abstract inviscid limit result.
    \begin{lemma}\label{lemma1}
    Let $\{S(t)\}_{t\ge0}$ be a holomorphic semigroup generated by $A: \mathscr{D}(A)\to H$, where $H$ is a separable Hilbert space. Suppose that $\{W_Q(t)\}_{t\ge0}$ is a $Q$-Wiener process which takes value in the space $\mathscr{D}(A)$ (equipped with the graph norm). Then, we have
    \[\lim_{\nu\to 0^+}\mathbb{E}\|X^{\nu}-W_Q\|_{C([0,T];H)}=0,\]
    where $X^{\nu}$ solves
    \[\begin{cases}
    dX^{\nu}(t)=\nu AX^{\nu}(t)dt+dW_Q(t)\\
    X^{\nu}(0)=0.
    \end{cases}\]
    \end{lemma}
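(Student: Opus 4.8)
The plan is to represent $X^{\nu}$ through its stochastic convolution and then exploit the $\mathscr{D}(A)$-regularity of the noise to replace that convolution by an ordinary (pathwise) Bochner integral, thereby bypassing any maximal inequality for stochastic convolutions. Since $A$ generates $\{S(t)\}_{t\ge0}$, the operator $\nu A$ generates the rescaled semigroup $\{S(\nu t)\}_{t\ge0}$, and the unique mild solution of the linear equation is
\[ X^{\nu}(t)=\int_0^t S(\nu(t-s))\,dW_Q(s). \]
Subtracting $W_Q(t)=\int_0^t dW_Q(s)$ gives $X^{\nu}(t)-W_Q(t)=\int_0^t\bigl(S(\nu(t-s))-I\bigr)\,dW_Q(s)$.

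First I would record the elementary identity, valid for $x\in\mathscr{D}(A)$,
\[ S(\nu(t-s))x-x=\nu\int_s^t S(\nu(t-r))Ax\,dr, \]
which follows by differentiating $r\mapsto S(\nu(t-r))x$ and using that $S$ commutes with $A$ on $\mathscr{D}(A)$. Because $W_Q$ takes values in $\mathscr{D}(A)$, this identity may be inserted into the stochastic integral; applying the stochastic Fubini theorem to interchange the $dr$ and $dW_Q(s)$ integrations and using $\int_0^r A\,dW_Q(s)=AW_Q(r)$ (legitimate since $A$ is closed and the noise is $\mathscr{D}(A)$-valued), I obtain the pathwise representation
\[ X^{\nu}(t)-W_Q(t)=\nu\int_0^t S(\nu(t-r))AW_Q(r)\,dr. \]
The integrability hypothesis required for the stochastic Fubini step is exactly the assumption that $W_Q$ is $\mathscr{D}(A)$-valued, i.e. that $AQ^{1/2}$ is Hilbert--Schmidt.

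With this representation the estimate becomes deterministic for each fixed $\omega$. Restricting to $\nu\in(0,1]$ so that $\nu(t-r)\in[0,T]$, I would set $M_T:=\sup_{\sigma\in[0,T]}\|S(\sigma)\|_{\mathcal{L}(H)}$, which is finite by strong continuity together with uniform boundedness on compacts. Then
\[ \|X^{\nu}(t)-W_Q(t)\|_H\le \nu\int_0^t\|S(\nu(t-r))\|_{\mathcal{L}(H)}\,\|AW_Q(r)\|_H\,dr\le \nu\,M_T\,T\sup_{r\in[0,T]}\|AW_Q(r)\|_H, \]
and taking the supremum over $t\in[0,T]$ yields $\|X^{\nu}-W_Q\|_{C([0,T];H)}\le \nu\,M_T\,T\sup_{r\in[0,T]}\|AW_Q(r)\|_H$. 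Taking expectations and noting that $\{AW_Q(t)\}_{t\ge0}$ is an $H$-valued Wiener process (the image of $W_Q$ under the operator $A$, which is bounded from $\mathscr{D}(A)$ with the graph norm into $H$, hence with covariance of finite trace), so that $\mathbb{E}\sup_{r\in[0,T]}\|AW_Q(r)\|_H<\infty$ by Doob's inequality and the It\^o isometry (or by Fernique's theorem applied to the Gaussian measure induced on $C([0,T];H)$), I conclude $\mathbb{E}\|X^{\nu}-W_Q\|_{C([0,T];H)}\le C\nu\to0$.

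The main obstacle I anticipate is the justification of the interchange of integration: a priori $t\mapsto X^{\nu}(t)$ is a stochastic convolution rather than a martingale, so its supremum is not directly amenable to the Burkholder--Davis--Gundy inequality. The decisive point is that moving the generator onto the noise, which is permissible precisely because $W_Q$ lives in $\mathscr{D}(A)$, converts the convolution into a pathwise integral to which only the trivial bound $\|S(\sigma)\|_{\mathcal{L}(H)}\le M_T$ is applied; all the $\nu$-smallness is then explicit. Care is needed to ensure the stochastic Fubini theorem applies, but its hypotheses reduce to the trace-class condition already built into Assumption \ref{assumption} through the embedding of the noise into $\mathscr{D}(A)$.
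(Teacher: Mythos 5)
Your proof is correct, and it shares the paper's skeleton: write $X^{\nu}$ as the stochastic convolution generated by $\nu A$, use the $\mathscr{D}(A)$-valuedness of the noise to move $A$ onto $W_Q$, and thereby reduce the error $X^{\nu}-W_Q$ to a pathwise Bochner integral carrying an explicit factor of $\nu$. The two supporting steps, however, are handled genuinely differently. For the key representation $X^{\nu}(t)-W_Q(t)=\nu\int_0^t S(\nu(t-r))AW_Q(r)\,dr$, the paper simply invokes a ``stochastic integration by parts formula'' (citing Da Prato--Zabczyk), whereas you derive it from the identity $S(\nu(t-s))x-x=\nu\int_s^t S(\nu(t-r))Ax\,dr$ on $\mathscr{D}(A)$ plus the stochastic Fubini theorem, whose hypothesis you correctly reduce to the Hilbert--Schmidt bound $\|S(\nu(t-r))AQ^{1/2}\|_{HS}\le M_T\|AQ^{1/2}\|_{HS}<\infty$; incidentally, your sign is the right one (the paper writes $W_Q(t)-\nu\int_0^t S^{\nu}(t-s)AW_Q(s)\,ds$, a harmless typo since only norms are used afterwards). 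More substantially, for the uniform-in-$\nu$ operator bound, which occupies most of the paper's proof, the paper runs a Yosida-approximation argument through the Hille--Yosida resolvent estimates to obtain $\|S^{\nu}(t)\|_{\mathcal{L}(H)}\le Me^{\nu ct}$, while you exploit the rescaling identity $S^{\nu}(t)=S(\nu t)$, so that for $\nu\le 1$ and $t\in[0,T]$ the needed bound is just $M_T=\sup_{\sigma\in[0,T]}\|S(\sigma)\|_{\mathcal{L}(H)}<\infty$, a standard consequence of strong continuity. Your route is more elementary, makes transparent that holomorphy of $\{S(t)\}_{t\ge0}$ is never actually used (strong continuity suffices, a fact hidden in the paper's argument as well), and still produces the pathwise $O(\nu)$ estimate that the paper later extracts from its proof when it asserts $\|z^{\nu}-W_Q\|_{C([0,T];H^4(D))}\le C(\omega)\nu$. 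One small inaccuracy: the integrability needed for your stochastic Fubini step is part of the abstract hypothesis that $W_Q$ is a $\mathscr{D}(A)$-valued $Q$-Wiener process, not something ``built into Assumption 1'' --- the lemma is self-contained and makes no reference to that assumption --- but this does not affect the correctness of your argument.
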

    \begin{proof}
    The solution $X^{\nu}$ is given by the stochastic convolution formula:
    \[X^{\nu}(t)=\int_0^tS^{\nu}(t-s)dW_Q(s)\]
    cf. \cite{daprato}, where $\{S^{\nu}(t)\}_{t\ge0}$ is the holomorphic semigroup generated by $\nu A$. The stochastic integration by parts formula ensures that 
    \[X^{\nu}(t)=W_Q(t)-\nu A\int_0^tS^{\nu}(t-s)W_Q(s)ds.\]
    Moreover, notice that $\{W_Q(t)\}_{t\ge0}$ takes value in $\mathscr{D}(A)$, then we obtain
    \[X^{\nu}(t)=W_Q(t)-\nu\int_0^tS^{\nu}(t-s)AW_Q(s)ds,\]
    which implies
    \begin{align*}
        \mathbb{E}\|X^{\nu}-W_Q\|_{C([0,T];H)}&\le \nu\mathbb{E}\left\|\int_0^tS^{\nu}(t-s)AW_Q(s)ds\right\|_{C([0,T];H)}\\&\le \nu\mathbb{E}\sup_{t\in[0,T]}\int_0^t\|S^{\nu}(t-s)\|_{L(H)}\|AW_Q(s)\|_{H}ds\\&\le \nu\left(\int_0^{T}\|S^{\nu}(t)\|_{L(H)}dt\right)\mathbb{E}\|W_Q\|_{C([0,T];\mathscr{D}(A))}.
    \end{align*}
    Hence, to obtain the desired convergence, it suffices to bound the term 
    \begin{align*}
        I_{\nu}=\int_0^T\|S^{\nu}(t)\|_{L(H)}dt.
    \end{align*}
    To this end, let us introduce the Yoshida approximation
    \[A_n^{\nu}:=n\nu A(nI-\nu A)^{-1}.\]
    Define
    \[e^{tA_n^{\nu}}:=\sum_{k=0}^{\infty}\frac{t^k(A_n^{\nu})^k}{k!}.\]
    Then, we have 
    \[\lim_{n\to\infty}e^{tA_{n}^{\nu}}=S^{\nu}(t)\]
    in the strong topology of operators. This implies that 
    \[\|S^{\nu}(t)\|_{L(H)}\le \liminf_{n\to\infty}\|e^{tA_n^{\nu}}\|_{L(H)}.\]
    Since $A$ is a generator of a holomorphic semigroup, then Hille-Yoshida theorem ensures that there exist constants $M>0$ and $c\in\mathbb{R}$ such that
    \[\|(\lambda I-A)^{-k}\|_{L(H)}\le \frac{M}{(\lambda- c)^k}\]
    for all $k\in\mathbb{N}$ and $\lambda >c$. Moreover, notice that 
    \[A_n^{\nu}=n^2(nI-\nu A)^{-1}-nI,\]
    then we derive 
    \begin{align*}
        \|e^{tA_n^{\nu}}\|_{L(H)}&\le e^{-nt}\|e^{tn^2(nI-\nu A)^{-1}}\|_{L(H)}\\&\le e^{-nt}\sum_{k=0}^{\infty}\frac{t^kn^{2k}}{k!}\|(nI-\nu A)^{-k}\|_{L(H)}\\&\le e^{-nt}\sum_{k=0}^{\infty}\frac{t^kn^{k}}{k!}\left(\frac{n}{\nu}\right)^k\frac{M}{(n/\nu-c)^k}\le M\exp\left(\nu t\frac{cn/\nu }{n/\nu -c}\right),
    \end{align*}
    which implies
    \[\|S^{\nu}(t)\|_{L(H)}\le Me^{\nu ct}.\]
    Hence, we obtain
    \[I_{\nu}\le \int_0^T Me^{\nu ct}dt\le M\int_0^Te^{|c|t}dt\le MTe^{|c|T}<\infty\]
    for all $\nu<<1$. This implies 
    \begin{align*}
        \mathbb{E}\|X^{\nu}-W_Q\|_{C([0,T];H)}\le MTe^{|c|T}\nu \mathbb{E}\|W_Q\|_{C([0,T];\mathscr{D}(A))}\longrightarrow 0,
    \end{align*}
    as $\nu\to 0^+$. We thus conclude the proof.
    \end{proof}
    By applying the Leray projector on both sides of $(\ref{3})$, we derive that 
    \[\begin{cases}
    dz^{\nu}=\nu P\Delta z^{\nu}dt+dW_Q(t)\\
    z^{\nu}(0)=0
    \end{cases}.\]
    Since 
    \[P\Delta|_{\mathscr{D}(P\Delta)^3}: \mathscr{D}(P\Delta)^3\to \mathscr{D}(P\Delta)^2\]
    generates a holomorphic semigroup $e^{tP\Delta}|_{\mathscr{D}(P\Delta)^2}$ and $W_Q$ takes value in $\mathscr{D}(P\Delta)^3$, then by using Lemma \ref{lemma1}, we conclude 
    \[\lim_{\nu\to 0^+}\mathbb{E}\|z^{\nu}-W_Q\|_{C([0,T];H^4(D))}=0.\]
    Moreover, by analysing the proof of Lemma $\ref{lemma1}$, it is clear to have the following pathwise estimate
    \begin{align}\label{10}
        \|z^{\nu}-W_Q\|_{C([0,T];H^4(D))}\le C(\omega)\nu,
    \end{align}
    where 
    \[C(\omega)=MTe^{|c|T}\|W_Q\|_{C([0,T];H^6(D))}.\]

    \subsubsection{Analysis for the NS-type equation (\ref{4})} 
    Since the system $(\ref{4})$ is a NS-type PDE with random parameter, then we could apply the classical Kato-type argument. Set
    \begin{align}\label{15}
    w(t):=v^{\nu}(t)-v^E(t)+v(t),\end{align}
    where $v$ is given by $(\ref{7})$ and the random parameter $\delta(\omega)$ appeared in $(\ref{7})$ is to be determined. Notice that we have 
    \[w|_{\partial D}=(v^E-v)|_{\partial D}=0.\]
    Then, 
    \begin{align*}
        \frac{d}{dt}\|w(t)\|_{L^2(D)}^2&=2\langle w(t),\partial_tw(t)\rangle_{L^2(D)}\\
        &=2\langle w(t),\nu P\Delta v^{\nu}-P(u^{\nu}\cdot \nabla)u^{\nu}+P(u^E\cdot \nabla)u^E+\partial_tv(t)\rangle_{L^2(D)},
    \end{align*}
    which implies 
    \begin{align*}
        \frac{d}{dt}\|w(t)\|_{L^2(D)}^2+2\nu\|\nabla w\|^2_{L^2(D)}=:I_1+I_2+I_3,
    \end{align*}
    where
    \begin{align}\label{a1}
        I_1&:=2\langle w, \nu P\Delta (v^{E}-v)\rangle_{L^2(D)} \\\label{a2}
        I_2&:=2\langle w, P(u^E\cdot\nabla)u^E-P(u^{\nu}\cdot\nabla)u^{\nu}\rangle_{L^2(D)} \\\label{a3}
        I_3&:=2\langle w,\partial_t v\rangle_{L^2(D)}.
    \end{align}
    For $I_1$ given by $(\ref{a1})$, we derive 
    \begin{align}\label{e1}
        I_1&=-2\nu\langle\nabla w,\nabla v^E\rangle_{L^2(D)}+2\nu\langle\nabla w,\nabla v\rangle_{L^2(D)}\notag\\
        &\le 2\nu\|\nabla w\|_{L^2(D)}(\|\nabla v^E\|_{L^2(D)}+\|\nabla v\|_{L^2(D)})\notag\\
        &\le \nu\|\nabla w\|_{L^2(D)}^2+2\nu\|\nabla v^E\|^2_{L^2(D)}+2\nu\|\nabla v\|^2_{L^2(D)}\notag\\
        &\le \nu\|\nabla w\|_{L^2(D)}^2+\nu C(\omega)+\nu\delta^{-1}C(\omega).
    \end{align}
    As for $I_2$ given by $(\ref{a2})$, we have
    \begin{align}\label{a4}
        I_2&=2\langle w, P((u^E-u^{\nu})\cdot\nabla)u^E\rangle_{L^2(D)}+2\langle w,P(u^{\nu}\cdot\nabla)(u^E-u^{\nu})\rangle_{L^2(D)}\notag\\&=:I_{21}+I_{22}.
    \end{align}
    From $(\ref{13})$, $(\ref{14})$ and $(\ref{15})$, we have
    \begin{align}\label{b}u^E-u^{\nu}=W_Q-z^{\nu}-w+v,\end{align}
    which implies
    \begin{align}\label{e21}
        I_{21}&=2\langle w, ((u^E-u^{\nu})\cdot\nabla)u^E\rangle_{L^2(D)}
        \notag\\&\le 2\|w\|_{L^2(D)}\|u^E-u^{\nu}\|_{L^2(D)}\|\nabla u^E\|_{L^{\infty}(D)}
        \notag\\&\le C(\omega)\|w\|_{L^2(D)}(\|W_{Q}-z^{\nu}\|_{L^2(D)}+\|w\|_{L^2(D)}+\|v\|_{L^2(D)})
        \notag\\&\le C(\omega)\|w\|_{L^2(D)}^2+C(\omega)\|w\|_{L^2(D)}\|W_{Q}-z^{\nu}\|_{L^2(D)}+C(\omega)\delta^{\frac{1}{2}}\|w\|_{L^2(D)}
        \notag\\&\le C(\omega)\|w\|_{L^2(D)}^2+\|W_{Q}-z^{\nu}\|^2_{C([0,T];H^4(D))}+\delta
    \end{align}
    and
    \begin{align}\label{a5}
        I_{22}&=2\langle w,(u^{\nu}\cdot\nabla)(W_Q-z^{\nu}+v)\rangle_{L^2(D)}\notag\\&=2\langle w,(u^{\nu}\cdot\nabla)(W_Q-z^{\nu})\rangle_{L^2(D)}+2\langle w,(u^{\nu}\cdot\nabla)v\rangle_{L^2(D)}\notag\\&=:I_{221}+I_{222}.
    \end{align}
    For $I_{221}$ in (\ref{a5}), by applying $(\ref{10})$, we have
    \begin{align}\label{e221}
        I_{221}&\le C\|w\|_{L^2(D)}\|u^{\nu}(t)\|_{C([0,T];L^2(D))}\|W_Q(t)-z^{\nu}(t)\|_{C([0,T];H^4(D))}\notag\\
        &\le C(\omega)\nu\|w\|_{L^2(D)}\|u^{\nu}(t)\|_{C([0,T];L^2(D))}\notag\\&\le C(\omega)\|w\|_{L^2(D)}^2+\nu^2\|u^{\nu}(t)\|^2_{C([0,T];L^2(D))}.
    \end{align}
    From $(\ref{5})$, by applying the Burkholder-Gundy-Davis inequality, we derive that
    \begin{align*}
        \mathbb{E}\sup_{t\in[0,T]}\|u^{\nu}(t)\|_{L^2(D)}^2&\le \|u_0\|^2_{L^2(D)}+T\trace Q_0+2\mathbb{E}\sup_{t\in[0,T]}\left|\int_0^t\langle u^{\nu}(s),dW_Q(s)\rangle_{L^2(D)}\right|\\
        &\le\|u_0\|^2_{L^2(D)}+T\trace Q_0+2\mathbb{E}\left(\int_0^T\|Q^{\frac{1}{2}}_0u^{\nu}(t)\|_{L^2(D)}^2dt\right)^{\frac{1}{2}}
        \\&\le\|u_0\|^2_{L^2(D)}+T\trace Q_0+C\left(\mathbb{E}\int_0^T\|u^{\nu}(t)\|_{L^2(D)}^2dt\right)^{\frac{1}{2}}.
    \end{align*}
    In addition, by taking expectation on both sides of $(\ref{5})$, we obtain
    \[\mathbb{E}\|u^{\nu}(t)\|_{L^2(D)}^2\le\|u_0\|^2_{L^2(D)}+T\trace Q_0,\]
    which implies that
    \begin{align*}
        \mathbb{E}\left(\nu^2\sup_{t\in[0,T]}\|u^{\nu}(t)\|_{L^2(D)}^2\right)\le C\nu^2\left(\|u_0\|^2_{L^2(D)}+T\trace Q_0+1\right)\longrightarrow 0,
    \end{align*}
    as $\nu\to0^+$. This leads to
    \[\lim_{\nu\to 0^+}\nu^2\|u^{\nu}(t)\|^2_{C([0,T];L^2(D))}=0\]
    in probability. For $I_{222}$ given in $(\ref{a5})$, since the integration is taken over a small neighbourhood $\Gamma_{\delta}$ of the boundary $\partial D$ with width being $\delta(\omega)$, where $\delta(\omega)$ is a small random parameter, then we could apply the local coordinate constructed in Section \ref{aaa} to establish smallness for the integration $I_{222}$. Notice that $(\ref{0a})$ ensures
    \[J=1,\]
    where $J$ is the Jacobian of the transformation from Euclidean coordinate to the local coordinate. Hence, we derive with the help of $(\ref{0aa})$ that
    \begin{align}
        I_{222}&= \int_{0}^1\int_0^{\delta} (w_{\tau}h^{-1}u^{\nu}_{\tau}\partial_{\tau} v_{\tau}+w_{\tau}u_n^{\nu}\partial_n v_{\tau}+w_{n}h^{-1}u^{\nu}_{\tau}\partial_{\tau}v_n+w_{n}u^{\nu}_n\partial_{n}v_n)d\alpha ds\notag\\&=:J_1+J_2+J_3+J_4\label{c},
    \end{align}
    where $h$ is given in $(\ref{0aaa})$. By utilizing $(\ref{bb})$, $(\ref{a})$ and $(\ref{b})$, we estimate that
    \begin{align*}
    J_{1}&\le \int_0^1\int_0^{\delta}|w_{\tau}u_{\tau}^{\nu}\partial_{\tau}v_{\tau}|d\alpha ds\\&=\int_0^1\int_0^{\delta}|w_{\tau}(w_{\tau}+u_{\tau}^E-v_{\tau}+z_{\tau}^{\nu}-(W_{Q})_{\tau})\partial_{\tau}v_{\tau}|d\alpha ds\\&\le C(\omega)\|w\|_{L^2(D)}^2+C(\omega)\sqrt{\delta}\|w\|_{L^2(D)}+C(\omega)\|w\|_{L^2(D)}\|z^{\nu}-W_Q\|_{L^2(D)}\\&\le C(\omega)\|w\|^2_{L^2(D)}+C(\omega)\delta+C(\omega)\|z^{\nu}-W_Q\|_{L^2(D)}^2.
    \end{align*}
    The estimates for $J_3$ and $J_4$ given in $(\ref{c})$ are similar, we thus derive 
    \begin{align}\label{f4}J_1+J_3+J_4\le C(\omega)\|w\|^2_{L^2(D)}+C(\omega)\delta+C(\omega)\|z^{\nu}-W_Q\|_{L^2(D)}^2.\end{align}
    As for $J_2$ defined in $(\ref{c})$, the estimate is delicate, since $J_2$ involves with the normal derivative of tangential velocity $\partial_{n}v_{\tau}$ which is large, due to the appearance of boundary layers. By applying $(\ref{b})$, we derive
    \begin{align}\label{d}
        J_2&=\int_0^1\int_0^{\delta}w_{\tau}u_{n}^{\nu}\partial_{n}v_{\tau}d\alpha ds\notag
        \\&=\int_0^1\int_0^{\delta}w_{\tau}(w_{n}+u_n^E-v_n+z_n^{\nu}-(W_Q)_n)\partial_{n}v_{\tau}d\alpha ds\notag\\
        &=\int_0^1\int_0^{\delta}\left(w_{\tau}w_{n}\partial_{n}v_{\tau}+w_{\tau}(u_n^E-v_n)\partial_{n}v_{\tau}+w_{\tau}(z_n^{\nu}-(W_Q)_n)\partial_{n}v_{\tau}\right)d\alpha ds\notag\\&=:J_{21}+J_{22}+J_{23}
    \end{align}
    As for $J_{21}$ in $(\ref{d})$, let us first notice that for a divergence free vector field $v$ which is supported in $\Gamma_{\delta}$, we have 
    \begin{align}\label{9}
        \partial_{\tau}v_{\tau}(s,\alpha)+\partial_{n}(hv_n)(s,\alpha)=0,
    \end{align}
    then by integrating by parts, we obtain
    \begin{align*}
        &J_{21}=\int_0^1\int_0^{\delta}h^{-1}w_{\tau}hw_{n}\partial_{n}v_{\tau}d\alpha ds\\&=-\int_0^1\int_0^{\delta}\partial_{n}(h^{-1}w_{\tau})hw_{n}v_{\tau}d\alpha ds+\int_0^1\int_0^{\delta}h^{-1}w_{\tau}\partial_{\tau}w_{\tau}v_{\tau}d\alpha ds\\&=-\int_0^1\int_0^{\delta}\partial_{n}h^{-1}w_{\tau}hw_{n}v_{\tau}d\alpha ds-\int_0^1\int_0^{\delta}\partial_{n}w_{\tau}w_{n}v_{\tau}d\alpha ds\\&\ \ +\frac{1}{2}\int_0^1\int_0^{\delta}h^{-1}\partial_{\tau}w_{\tau}^2v_{\tau}d\alpha ds.
    \end{align*}
    Hence, with the help of Hardy-Littlewood inequality and $(\ref{b})$, we get
    \begin{align*}
        &J_{21}\le C(\omega)\|w\|_{L^2(D)}^2-\int_0^1\int_0^{\delta}\partial_{n}w_{\tau}w_{n}v_{\tau}d\alpha ds-\frac{1}{2}\int_0^1\int_0^{\delta}\partial_{\tau}(h^{-1}v_{\tau})w_{\tau}^2d\alpha ds
        \\&\le C(\omega)\|w\|_{L^2(D)}^2-\int_0^1\int_0^{\delta}(\partial_{n}u_{\tau}^{\nu}-\partial_{n}u^E_{\tau}+\partial_nv_{\tau}+(\partial_n(W_Q)_{\tau}-\partial_nz_{\tau}^{\nu}))w_{n}v_{\tau}d\alpha ds\\&\le C(\omega)\|\omega\|^2_{L^2(D)}+C(\omega)\delta\|\partial_n u^{\nu}_{\tau}\|_{L^2(\Gamma_{\delta})}\|\nabla w\|_{L^2(D)}+C(\omega)\sqrt{\delta}\|w\|_{L^2(D)}\\&\ \ -\frac{1}{2}\int_0^1\int_0^{\delta} \partial_n v^2_{\tau}w_nd\alpha ds+C(\omega)\|\nabla W_Q-\nabla z^{\nu}\|_{L^2(D)}\|w\|_{L^2(D)},
    \end{align*}
    where
    \begin{align*}
        -\frac{1}{2}\int_0^1\int_0^{\delta} \partial_n v^2_{\tau}w_nd\alpha ds&=\frac{1}{2}\int_0^1\int_0^{\delta}v_{\tau}^2\partial_n(hw_n)h^{-1}+v_{\tau}^2hw_n\partial_n h^{-1}d\alpha ds\\&\le-\frac{1}{2}\int_0^1\int_0^{\delta}v_{\tau}^2\partial_{\tau}w_{\tau}h^{-1}d\alpha ds+C(\omega)\sqrt{\delta}\|w\|_{L^2(D)}
        \\&\le\frac{1}{2}\int_0^1\int_0^{\delta}w_{\tau}\partial_{\tau}(v_{\tau}^2h^{-1})d\alpha ds+C(\omega)\sqrt{\delta}\|w\|_{L^2(D)}\\&\le C(\omega)\sqrt{\delta}\|w\|_{L^2(D)}.
    \end{align*}
    Hence, 
    \begin{align}
        J_{21}&\le C(\omega)\|w\|_{L^2(D)}^2+\delta+C(\omega)\|\nabla W_Q-\nabla z^{\nu}\|_{L^2(D)}^2+C(\omega)\frac{\delta^2}{\nu}\|\partial_n u^{\nu}_{\tau}\|^2_{L^2(\Gamma_{\delta})}\notag\\&+\frac{\nu}{2}\|\nabla w\|_{L^2(D)}^2.\label{f1}
    \end{align}
    For $J_{22}$ given in $(\ref{d})$, notice that $u_n^E-v_n$ vanishes on the boundary, then by applying the mean value theorem, we have
    \[\|u_n^E-v_n\|_{L^{\infty}(\Gamma_{\delta})}\le\delta\|\nabla u_n^E-\nabla v_n\|_{C(\overline{D})}, \]
    which implies
    \begin{align}\label{f2}
        J_{22}\le C(\omega)\delta\|w\|_{L^2(D)}\|\partial_n v_{\tau}\|_{L^2(D)}\le C(\omega)\sqrt{\delta}\|w\|_{L^2(D)}.
    \end{align}
    And for $J_{23}$ defined in $(\ref{d})$, by applying Hardy-Littlewood inequality and the pathwise estimate $(\ref{10})$, we obtain
    \begin{align}\label{f3}
        J_{23}&\le C(\omega)\delta\|\nabla w\|_{L^2(D)}\|\nabla z^{\nu}-\nabla W_Q\|_{L^2(D)}\le C(\omega)\delta\nu\|\nabla w\|_{L^2(D)}\notag\\&\le C(\omega)\delta^2\nu+\frac{\nu}{2}\|\nabla w\|_{L^2(D)}^2.
    \end{align}
    Finally, by combining $(\ref{f4})$, $(\ref{f1})$, $(\ref{f2})$ and $(\ref{f3})$, we arrive at
    \begin{align}\label{e222}
        I_{222}&\le C(\omega)\|w\|^2_{L^2(D)}+C(\omega)\delta+C(\omega)\delta^2\nu+C(\omega)\|z^{\nu}-W_Q\|_{C([0,T];H^4(D))}\notag\\&+C(\omega)\frac{\delta^2}{\nu}\|\partial_nu_{\tau}^{\nu}\|^2_{L^2(\Gamma_{\delta})}+\nu\|\nabla w\|^2_{L^2(D)}.
    \end{align}
    For $I_3$ given in $(\ref{a3})$, we have
    \begin{align}\label{e3}I_3\le C(\omega)\delta^{\frac{1}{2}}\|w\|_{L^2(D)}\le C(\omega)\|w\|_{L^2(D)}^2+\delta.\end{align}
    Combining $(\ref{e1})$, $(\ref{e21})$, $(\ref{e221})$, $(\ref{e222})$ and $(\ref{e3})$, we arrive at
    \begin{align}\label{8}
        \frac{d}{dt}\|w(t)\|_{L^2(D)}^2\le C(\omega)\|w(t)\|_{L^2(D)}^2+R^{\nu}_1(\omega)+R_2^{\nu}(\omega)+R_3^{\nu}(t,\omega),
    \end{align}
    where
    \[R^{\nu}_1(\omega)=C(\omega)\delta+C(\omega)\delta^2\nu+C(\omega)\nu+C(\omega)\nu\delta^{-1}+C(\omega)\|W_Q-z^{\nu}\|_{C([0,T];H^4(D))}^2,\]
    \[R^{\nu}_2(\omega)=\nu^{2}\|u^{\nu}(t)\|_{C([0,T];L^2(D))}^2,\]
    \[R^{\nu}_3(t,\omega)=C(\omega)\frac{\delta^2}{\nu}\|\partial_{n} u^{\nu}_{\tau}\|_{L^2(\Gamma_{\delta})}^2.\]
    To conclude the proof, let us figure out the random parameter $\delta(\omega)$. Define 
    \begin{align}\label{16}\alpha(\omega):=\left(2\nu \int_0^T\|\partial_{n} u^{\nu}_{\tau}\|^2_{L^2(\Gamma_{\delta_0})}ds\right)^{\frac{1}{3}}.\end{align}
    Notice that
    \[\lim_{\nu\to 0^{+}}\alpha=0\]
    in probability, then we set
    \begin{align}\label{17}\delta(\omega):=\min\left\{\frac{\nu}{\alpha},\delta_0\right\}\end{align}
    which satisfies 
    \[\lim_{\nu\to 0^{+}}\frac{\nu}{\delta}=\lim_{\nu\to0^+}\max\left\{\alpha,\frac{\nu}{\delta_0}\right\}=0\]
    in probability. Hence, we have 
    \[\lim_{\nu\to 0^{+}}R_1^{\nu}(\omega)=0\]
    in probability by using $(\ref{10})$. As for $R^{\nu}_3(t,\omega)$, we consider the following two cases.
    
    \noindent\textbf{Case I.} If
    \[\frac{\nu}{\alpha}\le \delta_0,\]
    then from $(\ref{17})$ we have $\delta=\frac{\nu}{\alpha}\le \delta_0$ and thus derive 
    \begin{align*}
        \int_0^TR_{3}^{\nu}(t,\omega)dt&=C(\omega)\frac{\delta^2}{\nu}\int_0^T\|\partial_{n} u^{\nu}_{\tau}\|_{L^2(\Gamma_{\delta})}^2dt\\&\le C(\omega)\frac{\nu}{\alpha^2}\int_0^T\|\partial_{n} u^{\nu}_{\tau}\|_{L^2(\Gamma_{\delta_0})}^2dt=\frac{\alpha C(\omega)}{2}.
    \end{align*}
    
    \noindent\textbf{Case II.} If
    \[\frac{\nu}{\alpha}> \delta_0,\]
    then from $(\ref{17})$ we have $\delta=\delta_0$ and thus derive 
    \begin{align*}
        \frac{\nu^3}{\delta_0^3}>\alpha^3=2\nu \int_0^T\|\partial_{n} u^{\nu}_{\tau}\|^2_{L^2(\Gamma_{\delta_0})}ds,
    \end{align*}
    which implies
    \begin{align*}
        \int_0^TR_{3}^{\nu}(t,\omega)dt\le \frac{\nu C(\omega)}{2\delta_0}.
    \end{align*}
    Summarizing the above results, we conclude that
    \[\int_0^TR_{3}^{\nu}(t,\omega)dt\le\frac{C(\omega)}{2}\max\left\{\alpha,\frac{\nu}{\delta_0}\right\}\longrightarrow0,\]
    in probability from the definition $(\ref{16})$ and the assumption \textit{3(a)} in the Main theorem. By applying Gronwall's inequality on $(\ref{8})$, we conclude 
    \[\|w(t)\|^2_{C([0,T];L^2(D))}\le e^{C(\omega)T}\left(T(R^{\nu}_1(\omega)+R^{\nu}_2(\omega))+\int_0^TR_3^{\nu}(t,\omega)dt\right).\]
    Since the right hand-side converges to 0 in probability as $\nu\to0^+$, then we have 
    \[\lim_{\nu\to 0^{+}}\|\omega(t)\|^2_{C([0,T];L^2(D))}=0\]
    in probability. Moreover, notice that
    \[\|v(t)\|_{C([0,T];L^2(D))}^2\le C(\omega)\delta\le C(\omega)\delta_0\longrightarrow 0,\]
    in probability as $\nu\to 0^+$, then 
    \[\lim_{\nu\to 0^{+}}\|v^{\nu}(t)-v^E(t)\|^2_{C([0,T];L^2(D))}=0\]
    in probability, which implies that 
    \[\lim_{\nu\to 0^{+}}\|u^{\nu}(t)-u^E(t)\|^2_{C([0,T];L^2(D))}=0\]
    in probability from $(\ref{10})$, $(\ref{15})$ and $(\ref{b})$ as desired.
    
    \subsection{Proof of the Main Theorem \textit{(1)} from \textit{3(b)}}
    First, let us notice that the assertions $\textit{3(b)}$ and $\textit{3(c)}$ given in the Main Theorem imply each other, due to the divergence free condition $(\ref{9})$. Hence, it suffices to prove the implication from $\textit{3(b)}$ to $\textit{(1)}$ given in the Main Theorem. The proof presented above from $\textit{3(a)}$ to $\textit{(1)}$ is still applicable, if we modify the method for bounding the term $J_2$ by using the Hardy-Littlewood inequality: 
    \begin{align*}
        J_2&=\int_0^1\int_0^{\delta}w_{\tau}u_{n}^{\nu}\partial_{n}v_{\tau}d\alpha ds\le C(\omega)\delta^{-1} \int_0^1\int_0^{\delta}|w_{\tau}u_{n}^{\nu}|d\alpha ds\\&\le C(\omega)\delta\|\nabla w\|_{L^2(D)}\|\partial_n u^{\nu}_n\|_{L^2(\Gamma_{\delta})}\\&\le C(\omega)\frac{\delta^2}{\nu}\|\partial_n u_n^{\nu}\|^2_{L^2(\Gamma_{\delta})}+\frac{\nu}{2}\|\nabla w\|_{L^2(D)}^2.
    \end{align*}
    This justifies the implication from $\textit{3}(b)$ to $ \textit{(1)}$.

    \vspace{.2in}
    \hspace{-.28in}
    {\bf Acknowledgments:}
This research was partially supported by National Key R\&D Program of China
under Grant No. 2020YFA0712000, National Natural Science Foundation of China under Grant Nos. 12171317, 12161141004 and 12250710674, Strategic Priority Research Program of Chinese Academy of Sciences under Grant No. XDA25010402, and Shanghai Municipal Education Commission under Grant No. 2021-01-
1052 07-00-02-E00087.

    \newpage
    \bibliographystyle{IEEEtran}
    \bibliography{reference}
    
    \end{document}